\definecolor{darkred}{rgb}{0.6,0.0,0.1}
\definecolor{darkgreen}{rgb}{0,0.5,0}
\definecolor{darkblue}{rgb}{0,0,0.5}
\begin{document}

\makeatletter%
\def\@fnsymbol#1{\ensuremath{\ifcase#1\or * \or \star \or 1 \or 2\or 3\or  , \or
g\or h\or i\else\@ctrerr\fi}}%
\makeatother%

\author{{\sc\large Maik Schwarz}\thanks{Institut de
    statistique, biostatistique et sciences actuarielles, Universit\'e
    catholique de Louvain,
    Belgium. \texttt{ingrid.vankeilegom@uclouvain.be, schwarz@phare.normalesup.org}}
  \and {\sc\large Geurt Jongbloed}\thanks{Delft Institute of Applied Mathematics,
Delft University of Technology, The Netherlands. \texttt{G.Jongbloed@tudelft.nl}} \and {\sc\large Ingrid Van Keilegom}$^\ast$
}

\title{On the identifiability of copulas in bivariate competing risks models}

\renewcommand{\asymp}{\sim}

\date{January 10, 2012}
\maketitle
\vspace{-3em}
\begin{abstract}
\noindent \textbf{Abstract.}\quad
\noindent In competing risks models, the joint distribution of the event times
is not identifiable even when the margins are fully known, which has
been referred to as the ``identifiability crisis in competing risks
analysis'' \citep{Cro:91}. We model the dependence between the event
times by an unknown copula and show that identification is actually
possible within many frequently used families of copulas. The result is then
extended to the case where one margin is unknown.


\end{abstract}

{\footnotesize

\noindent\emph{MSC 2010 subject classifications:}  Primary 62N01,  secondary 62N99\\
\noindent \emph{Keywords:} Copulas, competing risks, identification, 
 dependent censoring, bivariate distribution
}

\renewcommand{\eqref}[1]{(\ref{#1})}
\newcommand{\mb}[1]{\mathbf{#1}}
\newcommand{\mbb}[1]{\mathbb{#1}}
\newcommand{\mt}[1]{\mathrm{#1}}
\newcommand{\rv}{random variable}




\newtheoremstyle{mysc}
  {1em}
  {1em}
  {\it}
  {}
  {\sffamily\bfseries}
  {}
  {.5em}
  {}

\newtheoremstyle{myex}
  {1em}
  {1em}
  {\rm}
  {}
  {\sffamily\bfseries}
  {}
  {.5em}
  {}

\theoremstyle{mysc}
\newtheorem{prop}{Proposition}[section]
\newtheorem{assumption}[prop]{Assumption}
\newtheorem{coro}[prop]{Corollary}
\newtheorem{theo}[prop]{Theorem}
\newtheorem{defin}[prop]{Definition}
\newtheorem{lem}[prop]{Lemma}
\newtheorem{defass}[prop]{Definition and Assumption}
\newtheorem{rem}[prop]{Remark}
\newtheorem{illu}[prop]{Illustration}
\newtheorem{examples}[prop]{Examples}

\theoremstyle{myex}
\newtheorem{example}[prop]{Example}

\numberwithin{equation}{section}

\section{Introduction}
\label{sec:introduction}

The theory of competing risks is concerned with the analysis of
multiple possible causes of a certain event (``risk'') in a system of
interest. As an example, consider an animal experiment in which mice
die either from a disease (event time~$X$) or from the side effects of
some treatment (event time~$Y$). In this setting, one observes only
the minimum of the two event times, corresponding to the time of
death, and a variable indicating the cause of death, that is
\begin{equation}\label{eq:1}
Z:=X\wedge Y \quad\text{ and }\quad \Delta := \1_{X\leq Y}.
\end{equation}
In this paper, we investigate under which conditions the joint
distribution of $(X,Y)$ can be identified based on the
observations. Of course, in practice one could consider many more
potential risks even in this simple example, but in this paper we
treat the bivariate case exclusively. The focus lies in particular on
the case where there is dependence between the two risks~$X$ and $Y$,
which is modelled by a bivariate copula $C:=C_{\et\ct}$. Indeed, the
assumption of independence between $X$ and $Y$ seems unrealistic in a
practical context. In the animal experiment example, one might
suspect a positive correlation between the two risks, because it is
conceivable that the vulnerability to the treatment's side effects
depends on the course of the disease.

The identifiability of joint distributions in competing risks models
is not a recent topic. Already in the 1950s, \cite{Cox:59} pointed
out an identifiability problem in the bivariate case for independent
risks, and \cite{Tsi:75} showed in the general multivariate case
that the joint distribution of the failure times cannot be identified
by their minimum.  \cite{Cro:91} showed that ``the situation is even
worse than previously described'' -- even when the marginal laws of
$X$ and $Y$ are known, the joint distribution function is not
identifiable. This is a very undesirable property of the model, and so
\citeauthor{Cro:91} called out the ``identifiability crisis in competing risks
analysis''.

\citeauthor{Tsi:75}' observations have been the starting point for
investigations on conditions or modifications of the model which allow
for identification and estimation of the event time distribution, and
a variety of such models has been studied over the past decades. In
order to obtain identifiability of the joint distribution of $X$ and
$Y$, one has to restrict the class of possible models. For example,
one can exclude the independent case and restrict the class of
admissible dependence structures. \cite{BG:78} follow this approach
and show identifiability of the bivariate normal and the bivariate
exponential distributions introduced in \cite{MO:67} and
\cite{Gumb:60}. Instead of fixing the precise dependence structure of
the joint distribution in advance, \cite{SR:83} suppose that a certain
hazard ratio involving the event and the censoring time is
known. Under such assumptions, they derive pointwise bounds on the
marginal survivor function. \cite{EMY:03} use still a different
condition, involving a partial derivative of the conditional survivor
function of the event time given that the censoring time is larger
than a given threshold.

Another approach which has been the topic of many research articles
consists in modelling the dependence structure between the event and
the censoring time using a copula. In view of Sklar's theorem, this
kind of model allows for flexibility in the modelling of the
dependence structure without affecting hypotheses on the margins. The
identifiability of the copula and the margins can then be treated in
two separate steps.

\cite{ZK:95} and \cite{RW:01} suppose that the event and censoring
times are dependent via some known copula that is nowhere
constant. They show that their marginal distributions are identifiable
if their support is $(0,\infty)$ and develop the ``copula graphic
estimator''. \cite{KM:88} also work under the hypothesis of a known
copula; more specifically, they choose a Clayton copula with known
parameter.

Of course, supposing the copula to be known is quite as unrealistic as
supposing the risks $X$ and $Y$ to be independent, but in view of the
results in \cite{Cro:91} cited above, identification is impossible if
the copula is completely unknown. It is therefore natural to ask if
the copula can be identified within certain (parametric) classes.  It
is the scope of the present work to answer this question. To this end,
we proceed as follows. 

In Section~\ref{sec:ident-copul-surv}, we assume both of the marginal
distributions to be entirely known and concentrate on the
identification of the copula $C$ which describes the dependence
between the risks $X$ and $Y$. Although the practical relevance of
this setting is limited, its investigation is very instructive. We
develop assumptions on the family of admissible copulas such that
identification is possible based on the joint density of the
observations, and we show that various parametric classes of copulas
do actually satisfy such conditions. In particular, making use of a
recent result by \cite{Wys:12}, we prove that many well-known classes
of Archimedean copulas are identifiable. Certain classes of asymmetric
copulas turn out to be identifiable as well. We also give examples of
symmetric and asymmetric classes of copulas which cannot be
identified.

In Section~\ref{sec:unknown-FX}, we treat the more realistic case
where $F_X$ is completely unknown. Not surprisingly, additional model
assumptions are needed in order to obtain identifiability in this more
general setting. In particular, we assume that~$\Delta$ and $Z$ are
stochastically independent.  In the case where the copula $C_{XY}$ is
known, \cite{BV:08} develop an estimator of $F_X$ even when~$F_Y$ is
unknown. In contrast to this, we show that even if the copula is unknown, $F_X$ can be
identified if $F_Y$ is known. It is obviously impossible to consider all
copulas. However, we will see that the same subclasses of Archimedean
copulas as in Section~\ref{sec:ident-copul-surv} allow for
identification. We will elaborate on the advantages and limitations of
the model in Section~\ref{sec:unknown-FX} and discuss its relation to
the classical Koziol-Green model.

The simultaneous identification of $F_X$ and the copula in more
general models remains an open question. Though the techniques
developed in the present work will hopefully prove helpful in its
solution, the identifiability crisis is not over yet and requires
further investigation, as we finally discuss in
Section~\ref{sec:conclusion}.


\section{Known margins}
\label{sec:ident-copul-surv}

Throughout this section, suppose that the distributions $F_X$ and
$F_Y$ of event and censoring time $\et$ and $\ct$ are known. The
observations $(Z,\Delta)\in\R\times\{0,1\}$ are given by
\eqref{eq:1}. Assuming that the copula $C$ is absolutely continuous with respect to
the Lebesgue measure on the unit square with density $c$ in the sense
that
\[C(x,y)=\int_{-\infty}^x\int_{-\infty}^yc(u,v)\,dvdu,\]
we can write
\begin{multline*}
  \P[X\leq x, Y\leq y] = C (F_X (x), F_Y (y))
= \int_0^{F_X(x)} \int_0^{F_Y(y)} c(u,v) dvdu 
\end{multline*}
Consequently,
\begin{multline*}
  \P[Z>z,\Delta = 1] = \P[Y>X>z] 
= \int_{x=F_X(z)}^1 \int_{y=F_Y(F_X^{-1}(x))}^1 c(x,y) dydx
\end{multline*}
 and we obtain for the joint density $h$ of $(Z,\Delta)$ that
 \begin{align}\label{eq:6}
   \begin{split}
     h(z,0) &= f_Y(z) \int_{x=F_X(z)}^1 c(x,F_Y(z))dx=f_Y(z) - \frac{\partial}{\partial y} C(F_X(z),F_Y(y))\Big|_{y=z}\\
     h(z,1) &= f_X(z) \int_{y=F_Y(z)}^1 c(F_X(z),y)dy=f_X(z) - \frac{\partial}{\partial x} C(F_X(x),F_Y(z))\Big|_{x=z}.
   \end{split}
\end{align}
Note that this implies the basic equality
\begin{equation}
  \label{eq:jointH}
  C(F_X(z),F_Y(z)) = F_X(z) + F_Y(z) - H(z),
\end{equation}
where $H$ denotes the distribution function of $Z$.
From observation~\eqref{eq:6}, one sees that two copulas can be
distinguished based on the distribution of $(Z, \Delta)$ if and only if either of their
partial derivatives do not coincide on the curve 
   \begin{equation}
\Psi=(F_X(t), F_Y(t))_{t\in\R}\label{eq:curve_Psi}.
\end{equation}
 This proves the following result.

\begin{theo}\label{theo:generalident}
  Let $X$ and $Y$ be random variables with differentiable distribution
  functions $F_X$ and $F_Y$, respectively, jointly distributed
  according to an unknown copula belonging to a class $\mathcal{C}$ of
  copulas having a density, and let $(Z,\Delta)$ be the observable
  random variables defined in~\eqref{eq:1}. The class~$\mathcal{C}$ is
  identifiable based on the joint distribution of~$(Z,\Delta)$ if and
  only if for any two different copulas $C,C'\in\mathcal{C}$ there
  exists $z\in\R$ such that
  \begin{multline*}
    \left.\frac{\partial}{\partial x}C(F_X(x),F_Y(z))\right|_{x=z}
    \neq \left.\frac{\partial}{\partial
        x}C'(F_X(x),F_Y(z))\right|_{x=z} \\ \text{or}\quad
    \left.\frac{\partial}{\partial y}C(F_X(z),F_Y(y))\right|_{y=z}
    \neq \left.\frac{\partial}{\partial
        y}C'(F_X(z),F_Y(y))\right|_{y=z}.
  \end{multline*}
\end{theo}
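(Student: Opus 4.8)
The plan is to observe that this theorem is essentially a bookkeeping consequence of the density computation in~\eqref{eq:6}, so the proof should be short. First I would recall that, since $F_X$ and $F_Y$ are known and fixed, the joint distribution of $(Z,\Delta)$ is completely determined by the pair of functions $z\mapsto h(z,0)$ and $z\mapsto h(z,1)$, which by~\eqref{eq:6} are
\[
  h(z,0)=f_Y(z)-\left.\frac{\partial}{\partial y}C(F_X(z),F_Y(y))\right|_{y=z},\qquad
  h(z,1)=f_X(z)-\left.\frac{\partial}{\partial x}C(F_X(x),F_Y(z))\right|_{x=z}.
\]
Since $f_X$ and $f_Y$ do not depend on the copula, the map sending a copula $C\in\mathcal C$ to the law of $(Z,\Delta)$ is injective if and only if the map sending $C$ to the pair of partial-derivative functions evaluated along the curve~$\Psi$ is injective. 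Identifiability of the class $\mathcal C$ is by definition precisely this injectivity.

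Next I would make the two directions explicit. For the ``only if'' direction, suppose $\mathcal C$ is identifiable and take $C\neq C'$ in $\mathcal C$; then the laws of $(Z,\Delta)$ induced by $C$ and $C'$ differ, hence by the displayed formulas there must exist some $z\in\R$ at which either $h(z,1)$ or $h(z,0)$ differs between the two copulas, which after cancelling the common terms $f_X(z)$, respectively $f_Y(z)$, is exactly the stated inequality. For the ``if'' direction, suppose that for every pair $C\neq C'$ such a $z$ exists; then $h(z,1)$ or $h(z,0)$ differs at that $z$, so the two induced laws of $(Z,\Delta)$ are distinct, giving injectivity and hence identifiability. One should also note the measurability/regularity point that the partial derivatives appearing in~\eqref{eq:6} exist almost everywhere because $C$ has a density, so that the densities $h(\cdot,0)$ and $h(\cdot,1)$ are well defined as elements of $L^1$; it is enough that the stated inequality hold on a set of positive Lebesgue measure, but since changing the integrand on a null set does not change the distribution, stating it as ``there exists $z$'' is the natural formulation and matches the phrasing of the theorem.

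There is no serious obstacle here: the content is entirely in the earlier derivation of~\eqref{eq:6}, and the theorem merely restates it. The one point that warrants a sentence of care is the equivalence between ``the joint \emph{distribution} of $(Z,\Delta)$ coincides'' and ``the two \emph{densities} $h(\cdot,\delta)$ coincide'': this holds because $h$ is by construction a density for that law with respect to $(\mathrm{Lebesgue})\otimes(\text{counting on }\{0,1\})$, and two measures with densities agreeing a.e. are equal while, conversely, equal measures force their densities to agree a.e. Beyond this, the proof is just the contrapositive of the defining property of identifiability combined with the explicit formula~\eqref{eq:6}, so I would present it in two or three sentences rather than as a long argument.
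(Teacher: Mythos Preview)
Your proposal is correct and follows essentially the same route as the paper: the authors also derive the result directly from the density formula~\eqref{eq:6}, observing that two copulas yield the same law of $(Z,\Delta)$ if and only if both partial derivatives coincide along the curve $\Psi$. Your additional remarks on a.e.\ versus pointwise equality of densities are a reasonable bit of extra care that the paper leaves implicit.
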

This theorem provides us with a necessary and sufficient
identifiability criterion for arbitrary classes of absolutely
continuous copulas and margins admitting densities.  In practice, this
criterion will be difficult to verify for a given class, though. As
noted, the class of all copulas is far too big to be identifiable as
has been shown by \cite{Cro:91}. In the remainder of the present
section we will therefore restrict our attention to more particular
classes of copulas and develop identification criteria that are easier
to apply.


\subsection{Symmetric copulas}
\label{sec:symmetric-copulas}

It follows from representation~\eqref{eq:6} that whenever $F_X=F_Y$
and in addition both first partial derivatives of~$C$ with respect
to~$x$ and~$y$ coincide along the curve $\Psi$ defined in~\eqref{eq:curve_Psi}, the joint density~$h$
does not depend on~$d$, which happens for instance when the copula is
symmetric. In this case, the curve $\Psi$ is the diagonal of the unit
square, and the density of~$Z$ takes the simpler
form
\begin{equation}\label{eq:4}
h(z) = 2f_Y(z) - 2 \frac{\partial}{\partial y} C(F_Y(z),F_Y(y))|_{y=z} 
\end{equation}
or, equivalently,
\begin{equation}\label{eq:3}
\delta_C(F_Y(z)):=C(F_Y(z),F_Y(z)) = 2F_Y(z) - H(z), 
\end{equation}
where $H$ denotes the distribution function of $Z$ and where $\delta_C$ is
called the diagonal section of $C$.

\begin{theo}\label{thm:symmetric}
Suppose that
the marginal distributions $F_X$ and $F_Y$ are arbitrary but known.
 Then, the class of all symmetric
  copulas is not identifiable.
\end{theo}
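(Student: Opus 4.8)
The plan is to exhibit two distinct symmetric copulas that induce the same law of $(Z,\Delta)$; by Theorem~\ref{theo:generalident} — equivalently, reading off~\eqref{eq:6} — this reduces to finding symmetric copulas $C\neq C'$ whose first-order partial derivatives coincide along the curve $\Psi$ of~\eqref{eq:curve_Psi}, since $h(\cdot,0)$ and $h(\cdot,1)$ depend on $C$ only through $\partial_1 C$ and $\partial_2 C$ restricted to $\Psi$. I would take for $C$ the independence copula $C_0(u,v)=uv$ and construct $C'=C_0+\varepsilon D$, where $\varepsilon>0$ is small and $D$ is a symmetric perturbation whose density is supported away from $\Psi$ and leaves the uniform margins unchanged.

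For the construction, assume $F_X$ and $F_Y$ are continuous and strictly increasing with common support (the remaining, degenerate cases only make non-identifiability easier and can be handled separately); then $\Psi$ is the graph of the increasing homeomorphism $\phi=F_Y\circ F_X^{-1}$ of $[0,1]$, with $\phi(0^+)=0$ and $\phi(1^-)=1$, and likewise for $\psi=\phi^{-1}$. Fix a small $\delta>0$ and set $Q_\delta=[\delta,2\delta]\times[1-2\delta,1-\delta]$ near the corner $(0,1)$ and $Q_\delta^{*}$ its reflection across the main diagonal, near $(1,0)$. For $\delta$ small these two squares lie in $(0,1)^2$, are disjoint, share no row and no column, and — this is the point — $\Psi$ misses both, since $\phi$ and $\psi$ map $[\delta,2\delta]$ strictly below $1-2\delta$ and $[1-2\delta,1-\delta]$ strictly above $2\delta$. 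On $Q_\delta$ put a ``checkerboard'' signed density that shifts mass between opposite sub-quadrants ($+1$ on two of them, $-1$ on the other two), on $Q_\delta^{*}$ put its mirror image, scale by $\varepsilon$, call the result $d$, and let $D(u,v)=\int_0^u\!\int_0^v d$. Then $d$ is symmetric, integrates to $0$ over every horizontal and every vertical line — so $D$ has null margins — and is bounded, so $1+\varepsilon d\geq 0$ for $|\varepsilon|$ small; hence $C'=C_0+\varepsilon D$ is a bona fide copula, it is symmetric, and $C'\neq C_0$.

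It remains to verify that $\partial_1 D=\partial_2 D=0$ along $\Psi$. Take $(u_0,v_0)\in\Psi$, so $v_0=\phi(u_0)$. If $u_0$ is outside the $u$-ranges of $Q_\delta$ and $Q_\delta^{*}$ then $d(u_0,\cdot)\equiv 0$; if $u_0\in[\delta,2\delta]$ then the column through $u_0$ meets the support of $d$ only in $[1-2\delta,1-\delta]$, which lies strictly above $v_0=\phi(u_0)<1-2\delta$, so $\int_0^{v_0}d(u_0,\cdot)=0$; and if $u_0\in[1-2\delta,1-\delta]$ the column meets the support only in $[\delta,2\delta]$, lying strictly below $v_0=\phi(u_0)>2\delta$, so $\int_0^{v_0}d(u_0,\cdot)$ equals the full line integral, which vanishes. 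In all cases $\partial_1 D(u_0,v_0)=0$, and the same dichotomy in the other coordinate gives $\partial_2 D(u_0,v_0)=0$. By~\eqref{eq:6}, $C_0$ and $C'$ produce identical $h(\cdot,0)$ and $h(\cdot,1)$, hence the same law of $(Z,\Delta)$; since both are symmetric copulas, the class of all symmetric copulas is not identifiable. When $F_X=F_Y$ the curve $\Psi$ is the diagonal, and the construction then amounts to two symmetric copulas with identical diagonal section $\delta_C$ (cf.~\eqref{eq:3}) — the familiar obstruction — because $\partial_1 D=\partial_2 D=0$ on the diagonal forces $D(u,u)\equiv 0$.

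The only genuinely delicate point is the geometry: one must place $Q_\delta$, $Q_\delta^{*}$ and choose $\delta$ so that $\Psi$ misses both strips $[\delta,2\delta]\times[1-2\delta,1-\delta]$ and $[1-2\delta,1-\delta]\times[\delta,2\delta]$ at once, using nothing more than continuity and monotonicity of $F_X$ and $F_Y$; the corner placement makes this transparent in the non-degenerate case. Keeping $C'$ $2$-increasing is free here because $C_0$ has a strictly positive, bounded density, and starting from $C_0$ is merely convenient — any symmetric copula with a positive bounded density on $Q_\delta\cup Q_\delta^{*}$ would serve just as well.
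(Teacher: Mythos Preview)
Your proof is correct and follows essentially the same strategy as the paper's: perturb a symmetric copula (with density) by a symmetric ``checkerboard'' signed density supported on two squares placed symmetrically about the diagonal and away from the curve~$\Psi$, so that the perturbation leaves the margins and the partial derivatives along~$\Psi$ unchanged. The only notable difference is that you handle arbitrary margins directly by anchoring the squares near the corners $(0,1)$ and $(1,0)$ and exploiting $\phi(0^+)=0$, $\phi(1^-)=1$, whereas the paper first treats the case $F_X=F_Y$ (so $\Psi$ is the diagonal) and then defers the general case to ``the appropriate transformations''.
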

 \begin{proof}{}{}
   Consider first the case $F_X=F_Y$. Then the curve 
$\Psi$ defined in~\eqref{eq:curve_Psi}
is the diagonal of
   the unit square.  In order to show the
   non-identifiability of the class of symmetric copulas, we construct two
   distinct symmetric copulas which coincide on a strip of positive
   width around the diagonal and which hence yield the same
   distribution~$H$.  To this end, define first functions from
   $[0,1]^2$ to $\{0,\pm1\}$ according to
\begin{multline*}
  q_{(x,y)}^\eta(s,t) :=
  \operatorname{sgn}[(s-x)(t-y)]\;\I{[(|s-x|\vee|t-y|)<\eta]} \\
  + \operatorname{sgn}[(s-y)(t-x)]\;\I{[(|s-y|\vee|t-x|)<\eta]}.
\end{multline*}
The support of the function $q^\eta_{(x,y)}$ consists of two squares
of side length $2\eta$ centred in $(x,y)$ and $(y,x)$,
respectively. These squares in turn consist of four smaller squares on
which $q^\eta_{(x,y)}$ is constant $+ 1$ or $-1$. Now let $C$ be a
symmetric copula admitting a density $c$ and let $x,y,\eta,\varepsilon\in (0,1)$
be such that $\supp(q^\eta_{(x,y)} )$ is in $[0,1]^2$ and does not
intersect with a neighbourhood of the diagonal as illustrated in Figure~\ref{fig:suppq} and
$c(s,t)>\varepsilon$ for $(s,t)\in\supp(q^\eta_{(x,y)})$. It is then
easy to see that the function $c + \epsilon q^\eta_{(x,y)}$ is a
density yielding a symmetric copula $C'$ which coincides with $C$ on a
band of positive width around the diagonal. Thus, the two
different symmetric copulas $C$ and $C'$ give rise to the same distribution
$H$ according to~\eqref{eq:6}, showing that the class of symmetric
copulas is not identifiable. 

The general case $F_X\neq F_Y$ is straightforward applying the
appropriate transformations.
\end{proof}

\begin{figure}[h]
  \centering 
  \includegraphics[scale=.955]{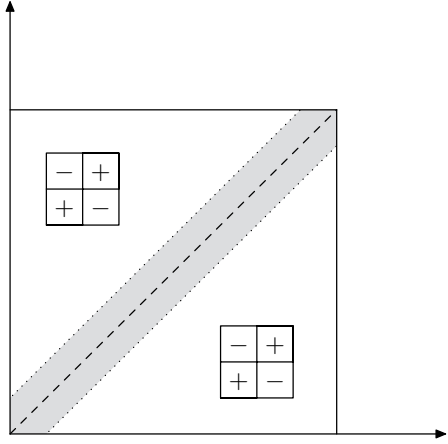}
 \caption{The support of the piecewise constant function
    $q^\eta_{(x,y)}$ consists of two squares that must not intersect
    with a neighbourhood of the diagonal}
  \label{fig:suppq}
\end{figure}


\subsection{Archimedean copulas}
\label{identarchcop}
We have seen that the class of all symmetric copulas is not identifiable. In this
section, we restrict our attention further and consider only
Archimedean copulas, that is copulas of the form
\[C(x,y) = \varphi^{[-1]}(\varphi(x) + \varphi(y))\]
for some generator function $\varphi\in\Omega$, where $\Omega$ denotes the set
of decreasing convex functions from $[0,1]$ to $[0,\infty]$ with
$\varphi(1)=0$. The pseudo-inverse $\varphi^{[-1]}$ of $\varphi$ is
given by
\[
\varphi^{[-1]}(t)=\left\{\begin{array}{cc}\varphi^{-1}(t) & \mbox{ if } 0\le t\le \varphi(0) \\ 0 & \mbox{ if }t>\varphi(0).\end{array} \right.
\]
In the case where $\varphi$ is unbounded, the associated copula is
called \emph{strict} and we have $\varphi^{[-1]}=\varphi^{-1}$. Because of their nice properties, Archimedean copulas are very commonly used in
the modelling of dependence and are therefore an interesting family of
copulas to consider. 

Recent work by \cite{Wys:12} on the construction of copulas from
diagonal sections allows us to state an identification result for a
subfamily of the class of Archimedean copulas in the special case where the
margins are equal. 
\begin{theo}\label{theo:wysocki_ident}
Suppose that
the marginal distributions $F_X$ and $F_Y$ are arbitrary but known. If $F_X = F_Y$, then the
subclass of strict Archimedean copulas generated by 
\[\Omega_1 := \{\varphi\in\Omega\;|\;
\varphi(0)=\infty,\; \lim_{t\uparrow  1}\varphi^\prime(t)<0 \}\]
is identifiable. 
\end{theo}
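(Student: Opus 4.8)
The plan is to exploit the reduction already provided by equation~\eqref{eq:3}: when $F_X=F_Y$, two symmetric copulas — and in particular two Archimedean copulas — produce the same law of $(Z,\Delta)$ if and only if their diagonal sections agree on the range of $F_Y$. So the task splits into two parts: first, reduce identifiability of the copula to identifiability of its diagonal section $\delta_C$; second, show that within the subclass generated by $\Omega_1$, the map $C\mapsto\delta_C$ is injective. The first part is essentially a restatement of~\eqref{eq:3} once one observes that an Archimedean copula is symmetric, hence its two partial derivatives coincide along the diagonal, so that $h(z)=h(z,0)+h(z,1)$ carries exactly the information $\delta_C(F_Y(z))=2F_Y(z)-H(z)$; since $F_Y$ is known and continuous (being differentiable), its range is an interval and $\delta_C$ is determined on that interval, which suffices to recover $\delta_C$ on all of $[0,1]$ by continuity of $\delta_C$ at the endpoints, i.e. $\delta_C(0)=0$, $\delta_C(1)=1$.

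The substantive step is the injectivity of $C\mapsto\delta_C$ on the Archimedean copulas generated by $\Omega_1$, and this is exactly where the cited result of \cite{Wys:12} enters. The idea is that an Archimedean copula has diagonal section $\delta_C(t)=\varphi^{-1}(2\varphi(t))$, and one wants to invert this relation to recover $\varphi$ (up to the usual positive-scalar ambiguity in the generator, which does not affect the copula) from $\delta_C$. I would appeal to Wysocki's theorem to the effect that a strict generator $\varphi$ with $\varphi(0)=\infty$ and $\lim_{t\uparrow1}\varphi'(t)<0$ (so the diagonal section has a proper, nonzero slope at $1$) is uniquely recoverable from its diagonal section — the condition on $\varphi'(1^-)$ is presumably precisely what guarantees that the functional equation $\delta_C(t)=\varphi^{-1}(2\varphi(t))$ has a unique solution in the generator, ruling out the pathological multiplicity that can occur for general Archimedean diagonals. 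Concretely, iterating the relation one gets $\varphi^{-1}(2^n\varphi(t))=\delta_C^{\circ n}(t)$, and the normalization together with the regularity at $1$ pins down $\varphi$ on a fundamental domain and hence everywhere.

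Assembling the pieces: given two copulas $C,C'$ in the subclass with the same observable distribution, \eqref{eq:3} forces $\delta_C=\delta_{C'}$ on the range of $F_Y$, hence on $[0,1]$; Wysocki's uniqueness then gives generators $\varphi,\varphi'$ that agree up to a positive multiplicative constant, whence $C=C'$. I expect the main obstacle to be citing \cite{Wys:12} at exactly the right level of generality — verifying that our hypotheses $\varphi(0)=\infty$ and $\lim_{t\uparrow1}\varphi'(t)<0$ match the hypotheses under which Wysocki proves uniqueness (as opposed to mere existence) of the generator with prescribed diagonal, and handling the harmless generator-scaling ambiguity. The measure-theoretic point that $F_Y$ differentiable implies its range is an interval on which $\delta_C$ is determined, and that continuity extends this to $[0,1]$, is routine and I would dispatch it in a line.
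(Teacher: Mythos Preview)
Your proposal is correct and follows essentially the same route as the paper: recover the diagonal section $\delta_C$ from the observable law via~\eqref{eq:3}, then invoke \cite{Wys:12} to conclude that $\delta_C$ determines the generator up to a positive scalar and hence the copula. The paper's version is terser---it normalises to $\lim_{t\uparrow1}\varphi'(t)=-1$ and cites Lemma~1 of \cite{Wys:12} directly---whereas you add the (correct and slightly more careful) observation that continuity of $F_Y$ forces its range to contain $(0,1)$, so $\delta_C$ is pinned down on all of $[0,1]$.
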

\begin{proof}{}{}
  The generator of an Archimedean copula being uniquely determined up
  to a multiplicative constant, we may assume that $\lim_{t\uparrow
    1}\varphi^\prime(t) = -1$ without loss of generality. Lemma~1 in
  \cite{Wys:12} states that under this assumption, the diagonal
  sections of two Archimedean copulas coincide (up to a multiplicative
  constant) if and only if their generators do so, that is, if the
  copulas are the same.  As by virtue of~\eqref{eq:3}, the diagonal
  section~$\delta_C$ of the copula can be identified when the margins
  are equal, this implies the identifiability of the whole copula.
\end{proof}
Theorem~\ref{theo:wysocki_ident} implies identifiability for
several well-known classes of Archimedean copulas when both margins
are equal on the unit interval. Important examples of such classes
can be found in \citet[ Table~4.1]{Nel:06}; in particular, the Frank
copulas are identifiable.  It is noteworthy that obviously even the
union of all the identified subclasses of~$\Omega_1$ is identifiable.
But the result is restricted to the special case of equal margins
and to families of strict copulas. For example, the class of strict
Clayton copulas is identifiable by virtue of the theorem, but some
Clayton copulas are not strict ($\theta < 0$). Also note that the theorem cannot
 be applied to every family of strict Archimedean copulas.
For instance, the result does not apply to the Gumbel family, because
$\varphi_\theta^\prime(1)=0$ for all $\theta>1$.

We have seen that Theorem~\ref{theo:wysocki_ident} allows for identification within
quite a large class of strict copulas, but only in the special case of
equal margins. We still need a more general criterion for classes of
strict copulas which are not covered by the theorem as well as for
nonstrict copulas and general margins.

Before stating the next result, let us recall that the level
sets of a copula $C$ are given by $\{(u,v)\in[0,1]^2\;|\; C(u,v) =
t\}$. For an Archimedean copula and for $t>0$, this level set consists
of the points on the level curve $\varphi(u) + \varphi(v) =
\varphi(t)$. For $t=0$, this curve is called zero curve; it is the boundary of the copula's
zero set. All level curves of an Archimedean copula are convex. 

\begin{figure}[h]
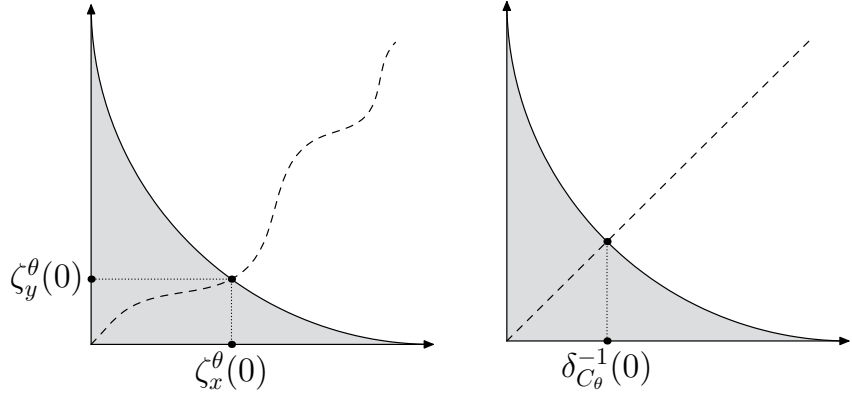

  \centering
  \includegraphics[scale=.9]{zero.2}\hspace{2em}
  \includegraphics[scale=.9]{zero.1}
  \caption{An Archimedean copula's zero curve and its intersection
    with the curve $\Psi=(F_X(t), F_Y(t))_{t\in\R}$ (on the left),
  and the special case $F_X=F_Y$ (on the right). The copula's
  zero set is shaded in grey.}
  \label{fig:zero1}
\end{figure}

\begin{theo}\label{res:one-parameter}Suppose that
the marginal distributions $F_X$ and $F_Y$ are arbitrary but known.
 Let $\Phi:=\{\varphi_\theta \;|\;
  \theta\in\Theta\subset\R\}\subset\Omega$ be a family of generators
  and write $C_{\theta}$ for the Archimedean copula generated by
  $\varphi_{\theta}$.  For $z\in[0,1]$ denote by
  $\zeta^\theta(z)=(\zeta^\theta_x(z),\zeta^\theta_y(z))$ the
  intersection point of the copula $C_\theta$'s level curve of level
  $z$ with the curve $\Psi$ given in~\eqref{eq:curve_Psi}. If for some $z\in[0,1]$ at least one of the two coordinates of
  $\zeta^\theta(z)$ is strictly monotonic as a function from $\Theta$ to
  $[0,1]$, then the class of copulas generated by $\Phi$ is
  identifiable.
\end{theo}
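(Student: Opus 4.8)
The plan is to bypass the derivative criterion of Theorem~\ref{theo:generalident} and argue straight from the basic identity~\eqref{eq:jointH}. That identity reads $C(F_X(t),F_Y(t))=F_X(t)+F_Y(t)-H(t)$; since $F_X$ and $F_Y$ are known and $H$, the distribution function of $Z$, is a functional of the law of $(Z,\Delta)$, the entire function $t\mapsto C(F_X(t),F_Y(t))$ --- that is, the values taken by $C$ along the curve $\Psi$ of~\eqref{eq:curve_Psi} --- is identified from the observations. Hence it suffices to prove: if two members $C_\theta,C_{\theta'}$ of the family coincide along $\Psi$, then $\theta=\theta'$.

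So fix a $z\in[0,1]$ for which one coordinate of $\zeta^\theta(z)$, say $\zeta^\theta_x(z)$ (the $y$-coordinate being handled symmetrically), is strictly monotone in $\theta\in\Theta$, and suppose $C_\theta$ and $C_{\theta'}$ agree along $\Psi$. Write $P:=\zeta^\theta(z)$, so $P\in\Psi$, say $P=(F_X(t_0),F_Y(t_0))$, and $C_\theta(P)=z$ by the definition of the level curve. Using the agreement on $\Psi$ we get $C_{\theta'}(P)=C_\theta(F_X(t_0),F_Y(t_0))=z$, so $P$ lies both on $\Psi$ and on the level curve of level $z$ of $C_{\theta'}$. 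Since that intersection reduces to a single point, namely $\zeta^{\theta'}(z)$, we obtain $P=\zeta^{\theta'}(z)$, i.e.\ $\zeta^\theta(z)=\zeta^{\theta'}(z)$. Comparing first coordinates and using strict monotonicity yields $\theta=\theta'$, hence $C_\theta=C_{\theta'}$: two copulas in the class giving rise to the same law of $(Z,\Delta)$ must coincide, which is exactly identifiability.

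The one ingredient deserving a word of justification --- and the only place where the argument could conceivably stumble --- is the claim, implicit in the statement, that a level curve of $C_\theta$ meets $\Psi$ in exactly one point. Existence, for $0<z<1$, follows from the intermediate value theorem applied to the continuous map $t\mapsto C_\theta(F_X(t),F_Y(t))$, which runs from $0$ to $1$ as $t$ ranges over $\R$; uniqueness follows because $\Psi$ is nondecreasing in each coordinate while a level curve of an Archimedean copula is the graph of a strictly decreasing convex function, and a nondecreasing curve can meet the graph of a strictly decreasing function at most once. Beyond this I expect no real obstacle: once~\eqref{eq:jointH} is available, the statement is essentially a bookkeeping fact about $\Psi$ and the level curves crossing it, the hypothesis being tailored precisely so that the crossing point determines $\theta$.
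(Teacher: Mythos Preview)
Your argument is correct and follows essentially the same route as the paper: both use identity~\eqref{eq:jointH} to recover the values of the copula along $\Psi$ from $H$, then observe that this pins down the intersection point $\zeta^\theta(z)$, whose strict monotonicity in $\theta$ yields identification. Your explicit justification that the level curve meets $\Psi$ in a single point is a welcome addition that the paper leaves implicit in the phrase ``the intersection point''.
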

\begin{proof}{}{} Suppose without loss of generality that $\zeta_x^\theta(z)$ is
strictly monotonic in $\theta$. The distribution function $H$ of $(Z,\Delta)$ is
  obviously identifiable. It is thus sufficient to show that the true
  parameter $\theta$ can be computed based on the knowledge of~$H$,
  which we therefore suppose to be known. Then, by virtue
  of~\eqref{eq:jointH} and using that both margins are known, the copula
  can be reconstructed completely along the path~$\Psi$. Consequently,
  the coordinate $\zeta^\theta_x(z)$ can be computed in this case for any
  choice of~$z$. (See Figure~\ref{fig:zero1} on the left for an illustration of
  the case $z=0$.) The strict monotonicity in~$\theta$ ensures
  that~$\zeta^\theta_x(z)$ uniquely characterises~$\theta$ and
  thus~$C_\theta$.
\end{proof}

\begin{example} 
  Let us consider the special case where the marginal distributions
  $F_X$ and $F_Y$ are the same and strictly monotonic on their whole
  support. Let $C$ be an Archimedean copula with generator
  $\varphi\in\Omega$. It is easily seen that we have, for $z\in(0,1]$,
\begin{multline*}
  \delta_C(F_X(x)) = C(F_X(x),F_X(x))=z\iff
  \varphi^{[-1]}(2\varphi(F_X(x)))=z\\\iff\varphi(F_X(x))=\frac12\varphi(z)\iff
  F_X(x)=\varphi^{-1}\Big(\frac12\varphi(z)\Big)=:\delta_C^{-1}(z).
\end{multline*}
For $z=0$, the second equivalence is not true in
general. Nevertheless, the same definition of $\delta^{-1}_C$ can be
used for $z=0$ as well, with the convention that $\delta^{-1}_C(0) =
\sup\{x\in\R\;|\;\delta_C(x) = 0\}$.  The intersection point
$\zeta(z)$ of the copula's level curve of level $z$ with the diagonal
can be expressed in terms of the inverse diagonal section, namely
$\zeta(z)=(\delta^{-1}_C(z),\delta^{-1}_C(z))$.  A parametric class of
Archimedean copulas $\{C_\theta\}$ is thus identifiable if for some
$z$, the inverse diagonal section $\delta^{-1}_{C_\theta}(z)$ is
strictly monotonic as a function of $\theta$.

Choosing $z=1/2$, this condition can be applied to the Gumbel family. Recall
that the Gumbel family could not be treated with
Theorem~\ref{theo:wysocki_ident}.  If $\varphi$ is bounded
(i.e. if~$C$ is nonstrict), $\delta^{-1}_C(0)$ is strictly positive
and corresponds to the intersection of the diagonal with the boundary
of the copula's zero set, see Figure~\ref{fig:zero1} on the
right. Supposing that $\varphi$ belongs to a parametric class,
knowledge of $\delta^{-1}_C(0)$ is often sufficient in order to
characterise~$C$ within the class.
Within the whole Archimedean class, though, there exist different
copulas having the same zero set \citep[cf.][p.~132]{Nel:06}, such
that this criterion fails.
\end{example}
\noindent Theorem~\ref{res:one-parameter} has a corollary that provides us with
an identifiability criterion in terms of the generator.

\begin{coro}\label{prop:critztheta} 
Let $\Phi:=\{\varphi_\theta \;|\; \theta\in\Theta\subset\R\}\subset\Omega$ be
a family of bounded generators such that $\theta\mapsto \varphi_\theta(t)/\varphi_\theta(0)$
is strictly monotonic in $\theta$ for every $t\in(0,1)$. Suppose that
the marginal distributions $F_X$ and $F_Y$ are arbitrary but known.
Then, the copulas generated by the class $\Phi$ are identifiable.
\end{coro}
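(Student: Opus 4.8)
\noindent The plan is to deduce the corollary from Theorem~\ref{res:one-parameter} with the choice $z=0$: I will show that the intersection point of the zero curve of $C_\theta$ with the path $\Psi$ moves strictly monotonically as $\theta$ varies. Since an Archimedean generator is determined only up to a positive multiplicative constant and each $\varphi_\theta$ is bounded, I would first normalise, setting $\psi_\theta:=\varphi_\theta/\varphi_\theta(0)\in\Omega$; this generates the same copula $C_\theta$ and satisfies $\psi_\theta(0)=1$ and $\psi_\theta(1)=0$, so the hypothesis of the corollary reads exactly: $\theta\mapsto\psi_\theta(t)$ is strictly monotonic for every $t\in(0,1)$. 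Because $\varphi_\theta$ is bounded, the zero set of $C_\theta$ is $\{(u,v)\in[0,1]^2:\psi_\theta(u)+\psi_\theta(v)\ge 1\}$ and its zero curve is the level set $\{(u,v):\psi_\theta(u)+\psi_\theta(v)=1\}$ (cf.\ Figure~\ref{fig:zero1}).

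\medskip
\noindent The core argument has two steps. First, for fixed $\theta_1\neq\theta_2$ the function $\psi_{\theta_1}-\psi_{\theta_2}$ is continuous on $(0,1)$ and, by strict monotonicity, never vanishes there, hence has constant sign; consequently $\theta\mapsto\psi_\theta(\cdot)$ is, on all of $(0,1)$ simultaneously, either strictly increasing or strictly decreasing in $\theta$, and we may assume the latter, so that $\theta_1<\theta_2$ implies $\psi_{\theta_1}(t)>\psi_{\theta_2}(t)$ for every $t\in(0,1)$. Second, parametrising $\Psi$ by $u\mapsto(u,G(u))$ with $G:=F_Y\circ F_X^{-1}$ (which, as implicitly assumed throughout this section, we take to be an increasing homeomorphism of $[0,1]$ with $G(0)=0$, $G(1)=1$), I introduce $\Phi_\theta(u):=\psi_\theta(u)+\psi_\theta(G(u))$. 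This $\Phi_\theta$ is continuous and strictly decreasing with $\Phi_\theta(0)=2$ and $\Phi_\theta(1)=0$, so there is a unique $\zeta^\theta_x(0)\in(0,1)$ with $\Phi_\theta(\zeta^\theta_x(0))=1$, and $\zeta^\theta(0)=(\zeta^\theta_x(0),G(\zeta^\theta_x(0)))$ is precisely the intersection point appearing in Theorem~\ref{res:one-parameter}. Now for $\theta_1<\theta_2$ the first step gives $\Phi_{\theta_1}>\Phi_{\theta_2}$ on $(0,1)$, so $\Phi_{\theta_2}(\zeta^{\theta_1}_x(0))<\Phi_{\theta_1}(\zeta^{\theta_1}_x(0))=1=\Phi_{\theta_2}(\zeta^{\theta_2}_x(0))$, and since $\Phi_{\theta_2}$ is strictly decreasing this forces $\zeta^{\theta_1}_x(0)>\zeta^{\theta_2}_x(0)$. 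Hence $\theta\mapsto\zeta^\theta_x(0)$ is strictly monotonic, and Theorem~\ref{res:one-parameter} yields identifiability. Geometrically, after the normalisation a larger parameter strictly shrinks the zero set, so its boundary---and thus its unique crossing with $\Psi$---slides monotonically toward the origin.

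\medskip
\noindent The one point genuinely requiring care is the regularity of the margins, which the statement leaves implicit. The argument above needs $\Psi$ to be strictly increasing (equivalently $G$ strictly increasing with $G(0)=0$, $G(1)=1$), in line with the setting of Theorems~\ref{theo:generalident} and~\ref{res:one-parameter}. If a margin is flat on an interval, $\Phi_\theta$ is only weakly decreasing there, the crossing of $\Psi$ with the zero curve may be a whole segment rather than a point, and one coordinate of $\zeta^\theta(0)$ can fail to be strictly monotonic while the other still is; I would dispose of this exactly as in the preceding results, by restricting to the interiors of the supports and, if needed, choosing the coordinate whose margin is strictly increasing there. The geometric mechanism---that increasing $\theta$ pushes the zero curve, hence its intersection with $\Psi$, monotonically---is untouched by these degeneracies, so I do not expect any further difficulty.
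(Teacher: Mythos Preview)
Your proposal is correct and follows the paper's overall strategy: normalise so that $\psi_\theta(0)=1$, show that the intersection of the zero curve with $\Psi$ moves strictly monotonically in $\theta$, and invoke Theorem~\ref{res:one-parameter} with $z=0$. The technical implementation differs in one useful way. The paper establishes monotonicity of $\zeta^\theta(0)$ by first proving that the zero curves $t\mapsto\tilde\varphi_\theta^{-1}(1-\tilde\varphi_\theta(t))$ are globally ordered in $\theta$, which requires the auxiliary Lemma~\ref{lem:inverse} on the $\theta$-monotonicity of inverses $\tilde\varphi_\theta^{-1}$. You bypass this by working directly along the path: defining $\Phi_\theta(u)=\psi_\theta(u)+\psi_\theta(G(u))$ and comparing its unique level-$1$ crossing for two parameter values needs only the pointwise ordering $\Phi_{\theta_1}>\Phi_{\theta_2}$ and the strict decrease of $\Phi_{\theta_2}$, so the lemma is not needed. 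Your constant-sign argument showing that the direction of monotonicity in $\theta$ is the same for every $t\in(0,1)$ is also a point the paper assumes without comment. The regularity caveat about the margins is implicit in the paper as well; your treatment is in line with how the surrounding results handle it.
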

\begin{proof}{}{}
  Recall that a constant times a generator results in the same
  Archimedean copula. Thus, without loss of generality, we can 
  consider the following class of generators instead of $\Phi$ which generates the same
  copulas:
  $\tilde\Phi=\{\tilde\varphi_\theta:=\varphi_\theta/\varphi_\theta(0)
  \;|\; \theta\in\Theta\subset\R\}\subset\Omega$.  
Note that by construction  $\tilde\varphi_\theta(0)=1$ for all~$\theta$ and $\tilde\varphi_\theta(t)$ is strictly monotonic as a
function of $\theta$ for all $t\in (0,1)$ (say strictly increasing, without loss of generality).

  Consider the intersection point $\zeta^\theta$ of the copula's zero curve
  with the curve $\Psi$ defined in~\eqref{eq:curve_Psi}. This
  point can be identified by means of the distribution function~$H$
  (cf. Theorem~\ref{res:one-parameter}). We show below that
  for $\theta'>\theta$, the zero curve of $C_{\theta}$ lies strictly
  above the one of $C_{\theta'}$ on $(0,1)$. This implies that two
  different choices of $\theta$ always result in two different
  intersection points $\zeta$ and thus completes the proof.

 The zero curve of the copula generated by $\tilde\varphi$ is
  given by $t\mapsto \tilde\varphi^{-1}(1 - \tilde\varphi(t))$ for
  $t\in[0,1]$.
Let $t\in(0,1)$. We have that (using Lemma~\ref{lem:inverse} in the
penultimate step)

\newcommand{\ftp}{\tilde\varphi_{\theta'}}
\newcommand{\ft}{\tilde\varphi_{\theta}}

\begin{alignat*}{2}
&&\ftp(t) &> \ft(t)  \\
&\iff&  \ftp(t) &> \ft(t) - \underbrace{(\ft(0) - \ftp(0))}_{=(1-1)=0}
\\
&\iff &  \ft(0) - \ft(t)  &> \ftp(\ftp^{-1}(\ftp(0)-\ftp(t)))\\
&\implies&   \ft(0) - \ft(t)  &> \ft(\ftp^{-1}(\ftp(0)-\ftp(t)))\\
&\iff& \ft^{-1}(\ft(0) - \ft(t)) &< \ftp^{-1}(\ftp(0) - \ftp(t)).
\end{alignat*}
This completes the proof. 
\end{proof}

\begin{example} 
\label{ex:corollary}
The class (4.2.2) from \cite{Nel:06}, which is given by
$\Phi:=\{\varphi_\theta(t) = (1-t)^\theta\;|\; \theta\in[1,\infty)\}$,
is identifiable by virtue of Corollary~\ref{prop:critztheta}, because $\varphi_\theta(0)
= 1$ for all $\theta$, and $\varphi_\theta(t)$ is strictly decreasing
in $\theta$ for all $t\in(0,1)$.  In fact,
Corollary~\ref{prop:critztheta} can be applied successfully to all
classes of bounded generators listed in \citet[ Table~4.1]{Nel:06}.  As
for most of the remaining classes (strict or not),
Theorem~\ref{res:one-parameter} 
can be applied successfully. The required computations become rather
unwieldy, though.
\end{example}


\subsection{Asymmetric copulas}
\label{sec:asymmetric-copulas}

So far we have restricted our attention to families of symmetric
copulas, which contain some of the most widely used classes. In this
section, we discuss briefly some families of asymmetric
copulas. Let us consider a copula density of the form
$c(x,y) = \tc(x-y)$ with a 1-periodic function $\tc:\R\to
[0,\infty)$ such that $\int_0^1\tc(x)\,dx=1$.  \cite{AB:05}
show that this density actually yields a
copula $C$ which they call a \textit{periodic copula}. Note that such
a periodic copula can be asymmetric (non-exchangeable) when $\tc$ is
not an even function. Let us consider two examples in which classes of
asymmetric copulas are identifiable in the context of the competing
risks model in the special case where the margins are uniform. The
construction of these classes is according to \cite{AB:05}.

\begin{example}[Periodic jump copulas] Suppose that $X$ and $Y$ are
  uniformly distributed on the unit interval.  For $\gamma\in(0,1/2)$,
  let $\tc_\gamma$ be the periodic continuation on $\R$ of
  $\gamma^{-1}\1_{[0,\gamma]}(x)$ (with $0\leq x<1$), denote by
  $C_\gamma$ the corresponding periodic copula, and let $\cC :=
  \{C_\gamma\;|\;\gamma\in(0,1/2)\}$.  It is easy to see
  that
  \[C_{x,\gamma}(z) := \frac{\partial}{\partial x}C(x,z)\Big|_{x=z}
  = \frac{z}{\gamma}\wedge 1.\] This implies that for two different
  $\gamma,\gamma'\in(0,1/2)$, we have that $C_{x,\gamma}(\gamma\wedge\gamma')\neq
  C_{x,\gamma'}(\gamma\wedge\gamma')$, whence the identifiability of
  $\cC$.
\end{example}
There are also classes of asymmetric copulas that are not
identifiable. We end this section with such a negative example.

\begin{example}[Generalised Cuadras-Aug\'e family]
  This class is defined  by
  \[C_{\alpha,\beta}(u,v) =
  \begin{cases}
    u^{1-\alpha}v & u^\alpha\geq v^\beta\\
    uv^{1-\beta} & u^\alpha\leq v^\beta.\\
  \end{cases}
  \]
  for $0<\alpha,\beta<1$ \citep[cf.][p.52ff]{Nel:06}. 
\newcommand{\fab}{f_{\alpha,\beta}}
\newcommand{\Cab}{C_{\alpha,\beta}}
The domain of such a copula is divided into two parts by the graph of
the function  $\fab(x)= x^{\alpha/\beta}$. In the upper left part of
the domain, the copula only depends on the parameter $\beta$, in the
lower right part only on $\alpha$. Thus, the parameter $\alpha$
is obviously determined by the value of the copula at any single point in the lower right part of
the domain, and the parameter $\beta$ by such a value in the other
part of the domain.

Recall that by virtue of~\eqref{eq:jointH}, the copula's values along the curve $\Psi$ defined
in~\eqref{eq:curve_Psi} are identifiable based on the distribution of
$(Z, \Delta)$. In view of the above discussion this means that a
copula's parameters $\alpha$ and $\beta$ can be identified if the
curve $\Psi$ crosses the graph of the function $\fab$ and, more
precisely, there are two points on $\Psi$ of which we know that they
lie on opposite sides of the graph of $\fab$. Observe that the curve~$\Psi$ is
the graph of the function
\[
\psi:[0,1]\to [0,1]:\quad u \mapsto F_Y(F_X^{-1}(u))
\]
and suppose that the marginal distributions $F_X$ and $F_Y$ are such
that the expression $\log\psi(u) / \log u$ is nowhere constant as a
function of $u$. The latter condition amounts to assuming that there
is no constant $r>0$ such that $F_X(t) = F_Y(t)^r$ on any interval
inside $[0,1]$. 

Let $z\in\R$ and $u:=u(z):=F_X(z)$. Then, we have that
$H(z)=C(u,\psi(u))$ and, by definition of the Cuadras-Aug\'e copulas,
\[H(z) = 
  \begin{cases}
    u^{1-\alpha}\psi(u) & u^\alpha\geq \psi(u)^\beta\\
    u\psi(u)^{1-\beta} & u^\alpha\leq \psi(u)^\beta.\\
  \end{cases}
\] 
Dividing by $u\psi(u)$, taking the logarithm, and finally dividing by
$\log u$, we obtain
\[\Gamma(z):=\frac{\log H(z) - \log u -\log \psi(u)}{\log u} = 
  \begin{cases}
    -\alpha & \psi(u)\leq f_{\alpha,\beta}(u)\\
    -\beta\frac{\log \psi(u)}{\log u} & \psi(u)\geq f_{\alpha,\beta}(u).\\
  \end{cases}
\] 
The left hand side of the last equation is identifiable based on the distribution of $(Z,
\Delta)$ and we may therefore consider it as known. The right hand side is constant for $\psi(u)\leq
f_{\alpha,\beta}(u)$ and nowhere constant otherwise by
assumption. Suppose that the curve $\Psi$ lies on both sides of the graph
of $f_{\alpha,\beta}$. On the one hand, function $\Gamma$ is then constant on some
intervals where it takes the value $-\alpha$. On the other hand,
there are intervals where $\Gamma$ is nowhere constant and
$\Gamma(z)\log (u)/\log(\psi(u)) = -\beta$. As $\log(u)/\log(\psi(u))$
is known, both parameters $\alpha$ and $\beta$ are identifiable in
this case. To summarise, we obtain the following sufficient
identifiability condition:

A set of parameters  $\mathcal{A} \subset
(0,1)^2\setminus\{(t,t)\;|\;t\in(0,1)\}$  defining a subclass of the
generalised Cuadras-Aug\'e family is identifiable if 
\[
\forall\; (\alpha,\beta)\in\mathcal{A}\quad\exists\; x_1,x_2\in(0,1)
\;:\; \psi(x_1) > \fab(x_1) \;\text{ and }\; \psi(x_2) < \fab(x_2)
\]
and if additionally $\log \psi(u) / \log u$ is nowhere constant as a function of $u$.

If the latter condition is not satisfied, but we know two points on
$\Psi$ that lie on opposite sides of the graph of $f_{\alpha,\beta}$ for every
$(\alpha,\beta)\in\mathcal{A}$, then the parameters are still
identified, which yields the following condition:
\[
\exists\; x_1,x_2\in(0,1)\quad\forall\; (\alpha,\beta)\in\mathcal{A}
\;:\; \psi(x_1) > \fab(x_1) \;\text{ and }\; \psi(x_2) < \fab(x_2).
\]
\end{example}


\newpage
\section{One unknown margin}
\label{sec:unknown-FX}

In this section, we consider the case where one of the
two margins is unknown. Our objective is to show that the unknown
margin $F_X$ and the copula $C_{XY}$ can be identified simultaneously
under certain conditions. To this end, we need stronger assumptions
than in the previous section, where both margins were entirely
known. More specifically, we will suppose that the two observed
variables 
\begin{equation}
\Delta \text{ and } Z \text{ are stochastically independent.} \label{ass:KG}
\end{equation}
As far as the family of copulas is concerned, we  use the
same conditions as in Corollary~\ref{prop:critztheta}. We show below that in this
setting, the copula is identifiable.

It is noteworthy that in the context of classical survival analysis,
i.e. assuming independence of the competing risks $X$ and $Y$, the
additional condition~\eqref{ass:KG} is equivalent to assuming the
so-called Koziol-Green model. In this censoring model, one assumes
that the survival function of the event time $X$ is a power of the
survival function of the censoring time $Y$. The Koziol-Green model
has been applied successfully in some practical situations. For
example, \cite{KG:76} and \cite{CH:81} consider prostate cancer data,
whereas \cite{Cso:88} treats the Channing House data from
\cite{Hyd:77}. Nevertheless, the classical Koziol-Green model has been
criticised as unrealistic in many settings, e.g. \cite{CF:98} called it
``too good to be frequently true''. Numerous modifications of the
original model have been suggested, e.g. the Generalised
Koziol-Green model in which hypothesis~\eqref{ass:KG} is weakened by
assuming that $\Delta$ and $Z$ may be dependent, but according to a
known copula. It is not within the scope of this paper to contribute to the
debate on the usefulness of the Koziol-Green model and its
modifications. Our purpose is rather the extension of the
identification results of the previous section to the context of
dependent competing risks with one unknown margin.

When $X$ and $Y$ are allowed to be dependent, condition~(\ref{ass:KG})
is weaker than the original ``proportional hazards'' assumption made
by \cite{KG:76}. \cite{BV:08} consider this case and develop a
strongly consistent estimator of $F_X$ in this context, even when
$F_Y$ is unknown. However, they assume the copula~$C_{XY}$ to be
known.  The next result shows that under assumption~(\ref{ass:KG}),
the distribution function~$F_X$ and the copula $C_{XY}$ can be
identified simultaneously (the latter within certain parametric
classes).

\begin{theo}\label{thm:koziol_green_identify}
  Suppose that $\Delta$ and $Z$ are independent and that $F_Y$ is
  known. Assume further that both $F_X$ and $F_Y$ are continuous and
  strictly increasing in $(0,\infty)$.  As in
  Corollary~\ref{prop:critztheta}, let $\Phi:=\{\varphi_\theta \;|\;
  \theta\in\Theta\subset\R\}\subset\Omega$ be a family of bounded
  generators such that $\theta\mapsto
  \varphi_\theta(t)/\varphi_\theta(0)$ is strictly monotonic in
  $\theta$ for every $t\in(0,1)$.  Then, the unknown marginal law
  $F_X$ and the copulas generated by the class $\Phi$ are
  identifiable.
\end{theo}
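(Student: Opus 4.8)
The plan is to exploit the additional information contained in assumption~\eqref{ass:KG}. Since $\Delta$ and $Z$ are independent, the joint law of $(Z,\Delta)$ is captured entirely by the distribution function $H$ of $Z$ and the number $p:=\P[\Delta=1]\in(0,1)$, and one has the identities $\P[Z\leq z,\Delta=1]=pH(z)$ and $\P[Z\leq z,\Delta=0]=(1-p)H(z)$. I keep~\eqref{eq:jointH} at hand and, exactly as in the proof of Corollary~\ref{prop:critztheta}, normalise the generators so that $\varphi_\theta(0)=1$ for all $\theta$; then $\theta\mapsto\varphi_\theta(t)$ is, say, strictly increasing for each $t\in(0,1)$, and the zero set of $C_\theta$ is $\{(u,v)\in[0,1]^2:\varphi_\theta(u)+\varphi_\theta(v)\geq 1\}$.

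The core of the argument is a local analysis near the origin. Since $\varphi_\theta$ is bounded, continuous and $\varphi_\theta(0)=1$, the zero set of $C_\theta$ contains a neighbourhood of $(0,0)$; the curve $\Psi=(F_X(t),F_Y(t))_t$ of~\eqref{eq:curve_Psi} starts at $(0,0)$, is strictly increasing and continuous and ends at $(1,1)$, so along it $\varphi_\theta(F_X(z))+\varphi_\theta(F_Y(z))$ decreases strictly from $2$ to $0$. Hence there is a unique $z^*\in(0,\infty)$ with $(F_X(z),F_Y(z))$ in the interior of the zero set precisely for $z<z^*$, and $\Psi$ meets the zero curve of $C_\theta$ at $(F_X(z^*),F_Y(z^*))$. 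For $z<z^*$ the rectangle $[0,F_X(z)]\times[0,F_Y(z)]$ carries no copula mass, so $\P[X\leq z,Y\leq z]=0$ and therefore $\P[Y<X\leq z]=\P[X\leq Y\leq z]=0$. Combining this with the elementary decompositions $F_X(z)=\P[Z\leq z,\Delta=1]+\P[Y<X\leq z]$ and $F_Y(z)=\P[Z\leq z,\Delta=0]+\P[X\leq Y\leq z]$ and with the independence identities yields $F_X(z)=pH(z)$ and $F_Y(z)=(1-p)H(z)$, so that
\[
F_X(z)=\frac{p}{1-p}\,F_Y(z)\qquad\text{and}\qquad H(z)=\frac{1}{1-p}\,F_Y(z)\qquad\text{for all }z\leq z^*.
\]

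Next I would show that $z^*$ is identifiable from the observables. For $z>z^*$ the point $(F_X(z),F_Y(z))$ lies strictly outside the closure of the zero set; picking $z'\in(z^*,z)$ one obtains an open ball around $(F_X(z'),F_Y(z'))$ on which the copula density is positive, and its part lying on the $\{X\leq Y\}$ side of $\Psi$ has positive probability and is contained in $\{X\leq Y\leq z\}$, so $\P[X\leq Y\leq z]>0$ and hence $(1-p)H(z)<F_Y(z)$. Therefore $z^*=\sup\{z\geq 0:(1-p)H(z)=F_Y(z)\}$, a function of the known objects $H$, $F_Y$, $p$. Consequently the point $\zeta^*:=(F_X(z^*),F_Y(z^*))=\bigl(\frac{p}{1-p}F_Y(z^*),\,F_Y(z^*)\bigr)$ is known, both its coordinates lie in $(0,1)$, and it satisfies $\varphi_\theta(\zeta^*_1)+\varphi_\theta(\zeta^*_2)=1$; by hypothesis $\theta\mapsto\varphi_\theta(\zeta^*_1)+\varphi_\theta(\zeta^*_2)$ is strictly monotonic, so at most one $\theta\in\Theta$ satisfies this equation — this identifies $\theta$, hence $C_\theta$, by the same mechanism already used in Corollary~\ref{prop:critztheta}. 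Finally, with $C_\theta$ and $F_Y$ now known, I recover $F_X$ pointwise from~\eqref{eq:jointH}: for each $z$ with $F_Y(z)<1$ the equation $C_\theta(u,F_Y(z))-u=F_Y(z)-H(z)$ has $F_X(z)$ as a solution, and $u\mapsto C_\theta(u,F_Y(z))-u$ is continuous and strictly decreasing on $[0,1]$, so $F_X(z)$ is the unique solution. Thus both $F_X$ and $\theta$ are identified.

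The step I expect to be the main obstacle is the local analysis: rigorously justifying that $\Psi$ genuinely enters the interior of the copula's zero set — this is precisely where boundedness of $\varphi_\theta$ is essential and where the strict case fails — and then establishing the strict inequality $(1-p)H(z)<F_Y(z)$ for $z>z^*$, which rests on positivity of the copula density just outside the zero set. Once the point $\zeta^*$ on the zero curve has been produced, identifying $\theta$ and then $F_X$ is routine and parallels the arguments of Section~\ref{sec:ident-copul-surv}.
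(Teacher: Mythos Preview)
Your overall strategy coincides with the paper's: locate the point where $\Psi$ exits the copula's zero set, read off its $y$-coordinate $F_Y(z^*)$ (observable since $F_Y$ is known), use the zero-curve equation to pin down $\theta$ via the monotonicity hypothesis, and then recover $F_X$. The technical route differs. The paper derives an integral representation
\[
\zeta^\theta_y(z)=\varphi_\theta^{[-1]}\Bigl(-\int_{H^{-1}(z)}^1\varphi_\theta'(H(s))\,dH(s,0)\Bigr),
\]
valid at every level $z$, which under the independence of $\Delta$ and $Z$ collapses to $\zeta^\theta_y(z)=\varphi_\theta^{[-1]}\bigl(\alpha\,\varphi_\theta(z)\bigr)$ with $\alpha=\P[\Delta=0]$; at $z=0$ this is $\varphi_\theta^{[-1]}(\alpha)$ after normalisation, and monotonicity in $\theta$ follows from the proof of Corollary~\ref{prop:critztheta}. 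Your argument via the elementary decompositions $F_X(z)=pH(z)$ and $F_Y(z)=(1-p)H(z)$ on the zero set is more direct, avoids the integral formula, and lands on the same identifying equation. The obstacle you single out --- showing $(1-p)H(z)<F_Y(z)$ strictly for $z>z^*$ --- is precisely the step the paper treats only by a brief reference to~\eqref{eq:6} when asserting that $\xi$ is identifiable, so your caution is well placed and applies to both arguments equally.

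One point where your argument is genuinely weaker: the claim that $u\mapsto C_\theta(u,F_Y(z))-u$ is \emph{strictly} decreasing can fail when $\varphi_\theta$ is affine on a subinterval. For instance the lower Fr\'echet bound ($\varphi(t)=1-t$) gives $C(u,v)-u=\max(v-1,-u)$, constant on $[1-v,1]$, so~\eqref{eq:jointH} alone does not determine $F_X(z)$ uniquely there. The paper sidesteps this by invoking Theorem~3.1 of \cite{ZK:95} once the copula is known; you should either add a strict-convexity hypothesis on $\varphi_\theta$ or cite that result as well.
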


\newpage
\begin{proof}{}{}
  For given $z \in (0,1)$, the level curve of level $z$ intersects
  with the curve $\Psi=(F_X(t),F_Y(t))_{t\in\R}$ for
$$ t = F_X^{-1} \Big[ \varphi_\theta^{[-1]} \Big\{- \int_{H^{-1}(z)}^1 \varphi_\theta^\prime(H(s)) \, dH(s,1) \Big\} \Big] $$
or equivalently at the point
\begin{eqnarray} \label{copul}
  && \Big(\varphi_\theta^{[-1]} \Big\{- \int_{H^{-1}(z)}^1 \varphi_\theta^\prime(H(s)) \, dH(s,1) \Big\}, \varphi_\theta^{[-1]} \Big\{- \int_{H^{-1}(z)}^1 \varphi_\theta^\prime(H(s)) \, dH(s,0) \Big\}\Big) \nonumber \\
  && := (\zeta^\theta_x(z), \zeta^\theta_y(z)).
\end{eqnarray}
In order to prove (\ref{copul}), we need to show that
$C(\zeta^\theta_x(z), \zeta^\theta_y(z)) = z$.  Since $C(x,y) =
\varphi_\theta^{[-1]}(\varphi_\theta(x) + \varphi_\theta(y))$, this means that we need to
show that $\varphi_\theta(\zeta^\theta_x(z)) + \varphi_\theta(\zeta^\theta_y(z)) = \varphi_\theta(z)$.
Indeed,
\begin{eqnarray*}
  && \varphi_\theta(\zeta^\theta_x(z)) + \varphi_\theta(\zeta^\theta_y(z)) \\
  && =  - \int_{H^{-1}(z)}^1 \varphi_\theta^\prime(H(s)) \, dH(s,1) -  \int_{H^{-1}(z)}^1 \varphi_\theta^\prime(H(s)) \, dH(s,0) \\
  && =  - \int_{H^{-1}(z)}^1 \varphi_\theta^\prime(H(s)) \, dH(s) \\
  && = - \int_z^1 \varphi_\theta^\prime(u) \, du = \varphi_\theta(z),
\end{eqnarray*}
since $\varphi_\theta(1)=0$. The independence of $\Delta$ and $Z$ implies
that there is a constant $\alpha \in [0,1]$ such that
\begin{align*}
  H(s,0) &= \alpha H(s) \\
  H(s,1) &= (1-\alpha) H(s),
\end{align*}
and thus, using~\eqref{copul}, we can write $\zeta^\theta_y(z) =
\varphi_\theta^{[-1]}(\alpha\varphi_\theta(z))$.  Following the proof
of Corollary~\ref{prop:critztheta}, we obtain that
$\zeta^\theta_y(0)$ is strictly monotonic in $\theta$ under the
assumptions.

Now let $\xi := \sup\{t\in[0,1]\;|\; C(F_X(t), F_Y(t)) =
0\}$. Note that in view
of~\eqref{eq:6}, $\xi$ is identifiable based on the
distribution of $(Z, \Delta)$. Furthermore, we have $\zeta^\theta_y(0) = F_Y(\xi)$,
which then implies the identifiability of $\theta$ using the monotonicity of
$\zeta^\theta_y(0)$ in~$\theta$. 
Having identified $\theta$, we may consider the copula as known. The
result then follows applying Theorem~3.1 from \cite{ZK:95}.
\end{proof}


\section{Discussion}
\label{sec:conclusion}

We have considered the problem of identifiability in a bivariate
competing risks model with dependence between the two event times $X$
and $Y$. It is well known that without further restrictions on the
model, the joint distribution of the event times is not identifiable
when only the minimum $Z$ of the two times and the indicator~$\Delta$
are observed.

First, we showed as an intermediate step that when the marginal
distributions of $X$ and $Y$ are known, their copula is identifiable
within many popular families of Archimedean copulas, but also in some
less known families. However, in many applications at least one of the
marginal distributions is the object of interest that has to be
estimated.

In the case where only one of the two margins is known, we were still
able to establish an identification result, but we needed the
additional assumption that $Z$ and $\Delta$ are stochastically
independent. Although this hypothesis can be tested in practice, it
would be desirable to replace it by a weaker assumption. One could
imagine that $Z$ and $\Delta$ are dependent via some copula
$C_{Z\Delta}$, but it is not obvious in which way this copula has to
be related to the copula $C_{XY}$ and how identifiability can be shown
in this setting.

The problem becomes still harder when both margins are completely
unknown.  Without any further assumptions on the margins, the class of
all Archimedean copulas is too large as to allow for identification
\citep{Wan:12}, and at present, we do not know if identification is
possible for certain subfamilies of Archimedean copulas. One could try
to tackle this problem assuming the margins to lie in parametric
classes. 

The generalisation of the results in this paper to the multivariate
case should be straightforward, at least for the Archimedean families
of copulas. Further research is needed for the development of
estimation methods in the models that we have shown to be
identifiable.


\appendix

\section{Appendix}
\label{sec:appendix}

\begin{lem}\label{lem:inverse}
  If $\{f_\theta\}$ is a family of monotonic
  bijective functions $f_\theta:[0,1]\to[0,1]$ such that $f_\theta(t)$
  is strictly increasing in $\theta$ for every $t\in (0,1)$, then
  $f^{-1}_\theta(t)$ is also strictly increasing in~$\theta$ for every
  $t\in(0,1)$.
\end{lem}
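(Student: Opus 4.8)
The plan is to fix $t\in(0,1)$ and two indices $\theta<\theta'$ and to prove the single inequality $f_\theta^{-1}(t)<f_{\theta'}^{-1}(t)$; everything then reduces to pushing the pointwise hypothesis through an inverse. First I would dispose of the routine structural facts about a single $f_\theta$: a monotonic bijection of $[0,1]$ onto itself is automatically strictly monotone and continuous, it maps $\{0,1\}$ onto $\{0,1\}$, and its inverse is again a strictly monotone bijection of $[0,1]$ with the same monotonicity direction. Next I would note that the whole family must be monotone in one and the same direction: if, say, $f_{\theta_1}$ were increasing and $f_{\theta_2}$ decreasing, then letting $t\to 0^+$ (in case $\theta_2<\theta_1$) or $t\to 1^-$ (in case $\theta_1<\theta_2$) contradicts the assumption that $f_\theta(t)$ is strictly increasing in $\theta$ for every $t\in(0,1)$. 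In the setting where the lemma is used, namely Corollary~\ref{prop:critztheta}, the $f_\theta$ are the normalised bounded Archimedean generators $\tilde\varphi_\theta$, which are strictly decreasing with $\tilde\varphi_\theta(0)=1$ and $\tilde\varphi_\theta(1)=0$.

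The core step is one line. Put $s:=f_{\theta'}^{-1}(t)$; since $t\in(0,1)$ and $f_{\theta'}$ is a strictly monotone bijection carrying endpoints to endpoints, $s\in(0,1)$, so the hypothesis applies at $s$ and gives $f_\theta(s)<f_{\theta'}(s)=t$. Now apply $f_\theta^{-1}$ to $f_\theta(s)<t$: as $f_\theta$ is strictly decreasing, so is $f_\theta^{-1}$, and the inequality reverses, so that $s=f_\theta^{-1}(f_\theta(s))>f_\theta^{-1}(t)$, i.e. $f_{\theta'}^{-1}(t)>f_\theta^{-1}(t)$, which is exactly the claimed strict monotonicity of $\theta\mapsto f_\theta^{-1}(t)$.

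I do not expect a genuine obstacle; the statement is bookkeeping once the elementary facts about monotone bijections are recorded. The only point needing a moment of care --- and the only place a slip could occur --- is the orientation of the inequality after composing with $f_\theta^{-1}$: the direction in which $\theta\mapsto f_\theta^{-1}(t)$ moves is opposite to the direction in which the $f_\theta$ are monotone in their own argument (for increasing $f_\theta$ the very same computation gives $s<f_\theta^{-1}(t)$ instead). Since the generators relevant to the paper are decreasing, one lands precisely on ``strictly increasing'', as stated; I would therefore keep the dependence on the monotonicity direction explicit in the write-up rather than absorb it into a ``without loss of generality''.
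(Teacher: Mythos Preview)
Your argument is essentially the same as the paper's: both set $s=f_{\theta'}^{-1}(t)$, apply the hypothesis at $s$ to get $f_\theta(s)<f_{\theta'}(s)=t$, and then invert through $f_\theta^{-1}$. Your explicit remark that the stated conclusion (``strictly increasing'') depends on the $f_\theta$ being decreasing is a genuine clarification --- the paper's equivalence $z>f_\theta^{-1}(f_{\theta'}(z))\iff f_\theta(z)<f_{\theta'}(z)$ tacitly uses this, and indeed the lemma is applied only to the decreasing normalised generators $\tilde\varphi_\theta$.
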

\begin{proof}{}{}
For $\theta'>\theta$ and a given $t\in(0,1)$,
  let $z:=f_{\theta'}^{-1}(t)$. Then, we have $t=f_{\theta'}(z)$, and consequently
  \begin{equation}
    f^{-1}_{\theta'}(t) > f^{-1}_\theta(t) \iff z >
    f^{-1}_{\theta}(f_{\theta'}(z)) \iff f_\theta(z) < f_{\theta'}(z),\label{eq:z0}
  \end{equation}
  which is true by assumption (see Figure~\ref{fig:phi} for an
  illustration of the second inequality).
  \begin{figure}[h]
    \centering
    \includegraphics[scale=.5]{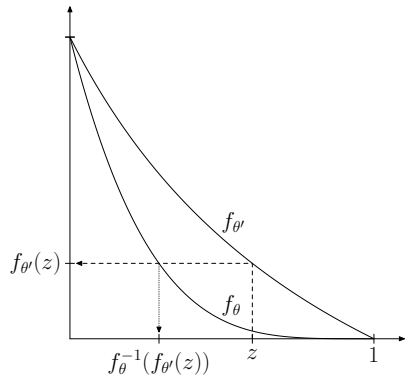}
    \caption{Illustration of inequality~\eqref{eq:z0}}
    \label{fig:phi}
  \end{figure}
\end{proof}


\section*{Acknowledgments}
\label{sec:acknowledgments}

  I. Van Keilegom and M. Schwarz acknowledge financial support from
  the IAP research network P7/06 of the Belgian Government (Belgian
  Science Policy) and from the European Research Council under the
  European Community's Seventh Framework Programme (FP7/2007-2013) /
  ERC Grant agreement No. 203650. I. Van Keilegom acknowledges further
  financial support from the contract ``Projet d'Actions de Recherche
  Concert\'ees'' (ARC) 11/16-039 of the ``Communaut\'e fran\c{c}aise de
  Belgique'', granted by the ``Acad\'emie universitaire Louvain''.  

\bibliographystyle{apalike}
\bibliography{dr}

\begin{thebibliography}{}

\bibitem[Alfonsi and Brigo, 2005]{AB:05}
Alfonsi, A. and Brigo, D. (2005).
\newblock {New families of copulas based on periodic functions.}
\newblock {\em Communications in Statistics, Theory and Methods},
  34(7):1437--1447.

\bibitem[Basu and Ghosh, 1978]{BG:78}
Basu, A. and Ghosh, J. (1978).
\newblock {Identifiability of the multinormal and other distributions under
  competing risks model.}
\newblock {\em Journal of Multivariate Analysis}, 8:413--429.

\bibitem[Braekers and Veraverbeke, 2008]{BV:08}
Braekers, R. and Veraverbeke, N. (2008).
\newblock {A conditional Koziol-Green model under dependent censoring.}
\newblock {\em Statistics and Probability Letters}, 78(7):927--937.

\bibitem[Cox, 1959]{Cox:59}
Cox, D. (1959).
\newblock {The analysis of exponentially distributed life-times with two types
  of failure.}
\newblock {\em Journal of the Royal Statistical Society, Series B},
  21:411--421.

\bibitem[Crowder, 1991]{Cro:91}
Crowder, M. (1991).
\newblock {On the identifiability crisis in competing risks analysis.}
\newblock {\em Scandinavian Journal of Statistics}, 18(3):223--233.

\bibitem[Cs{\"o}rg\H{o}, 1988]{Cso:88}
Cs{\"o}rg\H{o}, S. (1988).
\newblock Estimation in the proportional hazards model of random censorship.
\newblock {\em Statistics}, 19(3):437--463.

\bibitem[Cs{\"o}rg\H{o} and Faraway, 1998]{CF:98}
Cs{\"o}rg\H{o}, S. and Faraway, J.~J. (1998).
\newblock {The paradoxical nature of the proportional hazards model of random
  censorship.}
\newblock {\em Statistics}, 31(1):67--78.

\bibitem[Cs\"org\H{o} and Horv\'ath, 1981]{CH:81}
Cs\"org\H{o}, S. and Horv\'ath, L. (1981).
\newblock {On the Koziol-Green model for random censorship.}
\newblock {\em Biometrika}, 68:391--401.

\bibitem[Ebrahimi et~al., 2003]{EMY:03}
Ebrahimi, N., Molefe, D., and Ying, Z. (2003).
\newblock Identifiability and censored data.
\newblock {\em Biometrika}, 90(3):724--727.

\bibitem[Gumbel, 1960]{Gumb:60}
Gumbel, E. (1960).
\newblock {Bivariate exponential distributions.}
\newblock {\em Journal of the American Statistical Association}, 55:698--707.

\bibitem[Hyde, 1977]{Hyd:77}
Hyde, J. (1977).
\newblock Testing survival under right censoring and left truncation.
\newblock {\em Biometrika}, 64:22--30.

\bibitem[Klein and Moeschberger, 1988]{KM:88}
Klein, J.~P. and Moeschberger, M. (1988).
\newblock {Bounds on net survival probabilities for dependent competing risks.}
\newblock {\em Biometrics}, 44(2):529--538.

\bibitem[Koziol and Green, 1976]{KG:76}
Koziol, J.~A. and Green, S.~B. (1976).
\newblock {A Cramer-von Mises statistic for randomly censored data.}
\newblock {\em Biometrika}, 63:465--474.

\bibitem[Marshall and Olkin, 1967]{MO:67}
Marshall, A. and Olkin, I. (1967).
\newblock {A multivariate exponential distribution.}
\newblock {\em Journal of the American Statistical Association}, 62:30--44.

\bibitem[Nelsen, 2006]{Nel:06}
Nelsen, R.~B. (2006).
\newblock {\em {An introduction to copulas}}.
\newblock {Springer Series in Statistics}, second edition.

\bibitem[Rivest and Wells, 2001]{RW:01}
Rivest, L.-P. and Wells, M.~T. (2001).
\newblock {A martingale approach to the copula-graphic estimator for the
  survival function under dependent censoring.}
\newblock {\em Journal of Multivariate Analysis}, 79(1):138--155.

\bibitem[Slud and Rubinstein, 1983]{SR:83}
Slud, E.~V. and Rubinstein, L.~V. (1983).
\newblock {Dependent competing risks and summary survival curves.}
\newblock {\em Biometrika}, 70:643--649.

\bibitem[Tsiatis, 1975]{Tsi:75}
Tsiatis, A. (1975).
\newblock A nonidentifiability aspect of the problem of competing risks.
\newblock {\em Proceedings of the National Academy of Sciences USA},
  72(1):20--22.

\bibitem[Wang, 2012]{Wan:12}
Wang, A. (2012).
\newblock On the nonidentifiability property of archimedean copula models under
  dependent censoring.
\newblock {\em Statistics and Probability Letters}, 82(3):621 -- 625.

\bibitem[Wysocki, 2012]{Wys:12}
Wysocki, W. (2012).
\newblock Constructing archimedean copulas from diagonal sections.
\newblock {\em Statistics and Probability Letters}, 82(4):818 -- 826.

\bibitem[Zheng and Klein, 1995]{ZK:95}
Zheng, M. and Klein, J.~P. (1995).
\newblock {Estimates of marginal survival for dependent competing risks based
  on an assumed copula.}
\newblock {\em Biometrika}, 82(1):127--138.

\end{thebibliography}

\end{document}